\documentclass{amsart}

\usepackage[a4paper,hmargin=3.5cm,vmargin=4cm]{geometry}
\usepackage{amsfonts,amssymb,amscd,amstext}
\usepackage{graphicx}
\usepackage[dvips]{epsfig}
\usepackage{mathpazo}
\usepackage{enumerate}

\pretolerance=0

\def\Ccal{\mathcal{C}}

\def\Ncal{\mathcal{N}}
\def\Mcal{\mathcal{M}}
\def\Hcal{\mathcal{H}}
\def\Xcal{\mathcal{X}}

\def\Ycal{\mathcal{Y}}
\def\dsf{\mathsf{d}}
\def\C{\mathbb{C}}\def\c{\mathbb{C}}
\def\d{\mathbb{D}}
\def\R{\mathbb{R}}\def\r{\mathbb{R}}
\def\z{\mathbb{Z}}
\def\n{\mathbb{N}}

\def\b{\mathbb{B}}\def\b{\mathbb{B}}

\headsep 0.5cm
\topmargin 0cm
\textheight = 55\baselineskip
\textwidth 16cm
\oddsidemargin 0cm
\evensidemargin 0cm

\setlength{\parskip}{0.5em}

\newtheorem{theorem}{Theorem}[section]

\newtheorem{claim}[theorem]{Claim}
\newtheorem{lemma}[theorem]{Lemma}

\newtheorem{remark}[theorem]{Remark}
\newtheorem{definition}[theorem]{Definition}

\theoremstyle{definition}




\numberwithin{equation}{section}
\numberwithin{figure}{section}

\begin{document}

\title[Proper holomorphic embeddings of Riemann surfaces]
{Proper holomorphic embeddings of Riemann surfaces with arbitrary topology into $\c^2$}

\author[A.~Alarc\'{o}n]{Antonio Alarc\'{o}n}
\address{Departamento de Geometr\'{\i}a y Topolog\'{\i}a \\
Universidad de Granada \\ E-18071 Granada \\ Spain}
\email{alarcon@ugr.es}

\author[F.J.~L\'{o}pez]{Francisco J. L\'{o}pez}
\address{Departamento de Geometr\'{\i}a y Topolog\'{\i}a \\
Universidad de Granada \\ E-18071 Granada \\ Spain}
\email{fjlopez@ugr.es}


\thanks{Research partially
supported by MCYT-FEDER research project MTM2007-61775 and Junta
de Andaluc\'{i}a Grant P09-FQM-5088}

\subjclass[2010]{32C22; 32H02.} 
\keywords{Riemann surfaces, holomorphic embeddings.}

\begin{abstract}
We prove that given an open Riemann surface $\Ncal,$ there exists an open domain $\Mcal\subset \Ncal$ homeomorphic to $\Ncal$ which properly holomorphically embeds in $\c^2.$ Furthermore, $\Mcal$ can be chosen with hyperbolic conformal type. 
In particular, any open orientable surface $M$ admits a complex structure $\Ccal$ such that  $(M,\Ccal)$ can be properly  holomorphically  embedded into $\c^2.$
\end{abstract}

\maketitle

\thispagestyle{empty}

\section{Introduction}\label{sec:intro}

It is classically known that any open Riemann surface properly holomorphically embeds in $\c^3$ and immerses in $\c^2$ \cite{rem,Na1,nar,bis}. Bell-Narasimhan's conjecture asserts that any open Riemann surface can be properly holomorphically embedded in $\c^2$ \cite[Conjecture 3.7, p. 20]{BN}. Although this old embeddability problem has generated vast literature, it still remains open.

The first existence results for discs and annuli can be found in \cite{42} and \cite{32,5}, respectively. More recently, it has been proved that any finitely connected planar domain without isolated boundary points properly holomorphically embeds into $\c^2$ \cite{24} (see also \cite{CG}). Furthermore, any open orientable surface of finite topology admits a complex structure properly holomorphically embedding in $\c^2$ \cite{CF}. In the last few years, this area has experimented a great grothw. Specially interesting are the works by Wold \cite{W1,W2} and Forstneri${\rm \breve{c}}$ and Wold \cite{FW} (see also \cite{Ma}). These authors have shown that any bordered Riemann surface whose closure admits a (non-proper) holomorphic embedding into $\c^2$ actually properly holomorphically embeds  into $\c^2.$ (A bordered Riemann surface is the interior of a compact one-dimensional complex manifold with smooth boundary consisting of finitely many closed Jordan arcs.)
In  all these constructions, the (finite) topological type of the surface, and even its conformal structure, is not changed during the process. 

The aim of this paper is to show that the topology of an open Riemann surface plays no role in this setting. We extend the above mentioned result by $\breve{\text{C}}$erne and Forstneri$\breve{\text{c}}$ \cite{CF} to the case of surfaces with arbitrary topology, proving the following topological version of Bell-Narasimhan's conjecture:

\begin{quote}
{\bf Main Theorem.} {\em Let $\Ncal$ be an open Riemann surface. 

Then there exists an open domain $\Mcal\subset \Ncal$  homeomorphic to $\Ncal$ carrying a proper holomorphic embedding $\Ycal:\Mcal\to\c^2.$

In particular, any open orientable surface $M$ admits a complex structure $\Ccal$
such that the Riemann surface $(M,\Ccal)$ properly holomorphically embeds in $\c^2.$}
\end{quote}

The proper embedding $\Ycal:\Mcal\to\c^2$ in Main Theorem is obtained as the limit of a sequence of holomorphic embeddings $\{Y_n :M_n\to\c^2\}_{n\in\n},$ where $\{M_n\}_{n\in\n}$ is a suitable expansive sequence of compact regions in $\Ncal$ and $\Mcal=\cup_{n\in\n} M_n.$  The sequence is constructed by combining a bridge principle for holomorphic embeddings with Forstneri${\rm \breve{c}}$ and Wold's techniques. 


It is worth mentioning that the open Riemann surface $\Mcal$ in Main Theorem can be chosen of hyperbolic conformal type. Finally, let us point out that Main Theorem actually follows from a more general extension result for holomorphic embeddings into $\c^2$ (see Theorem \ref{th:main}). 


\section{Preliminaries}

As usual, we denote by $\|\cdot\|$ the Euclidean norm in $\c^n,$ $n\in\n,$ and for any compact topological space $X$ and continuous map $f:X \to \C^n$ we denote by $\|f\|=\max\{\|f(p)\|\,|\, p \in X\}$ the maximum norm of $f$ on $X.$

Non-compact Riemann surfaces without boundary are said to be {\em open}.

\begin{remark}\label{re:fun}
Throughout this paper, $\Ncal$ and $\omega$ will denote a fixed but arbitrary  open Riemann surface and a complete smooth conformal metric on $\Ncal,$ respectively.
\end{remark}

For any  $S \subset \Ncal,$ $S^\circ$ and $\overline{S}$ will denote the interior and the closure of $S$ in $\Ncal,$ respectively.
 
Given a Riemann surface $M$ contained in $\Ncal,$  we denote by $\partial M$ the $1$-dimensional topological manifold determined by its boundary points. Open connected subsets of $\Ncal$ will be called {\em domains}, and those proper topological subspaces of $\Ncal$ being Riemann surfaces with boundary are said to be  {\em regions}.

A subset $S \subset \Ncal$ is said to be {\em Runge} if $\Ncal-S$ has no relatively compact components in $\Ncal,$ or equivalently, if the inclusion map $\jmath_S: S\hookrightarrow \Ncal$ induces a group monomorphism  $(\jmath_S)_*:\Hcal_1(S,\z) \to \Hcal_1(\Ncal,\z).$ In this case we identify the groups  $\Hcal_1(S,\z)$ and  $(\jmath_S)_*(\Hcal_1(S,\z)) \subset \Hcal_1(\Ncal,\z)$ via $(\jmath_S)_*$ and consider $\Hcal_1(S,\z) \subset \Hcal_1(\Ncal,\z).$ 

Two Runge subsets $S_1,$ $S_2\subset \Ncal$ are said to be {\em isotopic} if $\Hcal_1(S_1,\z)= \Hcal_1(S_2,\z).$ Two Runge subsets $S_1,$ $S_2 \subset \Ncal$ are said to be {\em homeomorphically isotopic} if there exists a homeomorphism $\sigma: S_1 \to S_2$ such that $\sigma_*={\rm Id}_{\Hcal_1(S_1,\z)},$ where $\sigma_*$ is  the induced group morphism on homology. In this case $\sigma$ is said to be an {\em isotopical homeomorphism}. Two Runge domains with finite topology (or two Runge compact regions) in $\Ncal$  are isotopic if and only if they are homeomorphically isotopic.

Let $W$ be a Runge domain of finite topology in $\Ncal$, and let $S$ be a compact Runge subset in $\Ncal.$ $W$ is said to be a {\em tubular neighborhood} of $S$ if $S \subset W$ and $S$ is isotopic to $W.$ In addition, if $\overline{W}$ is a compact region isotopic to $W$ then $\overline{W}$ is said to be a {\em compact tubular neighborhood} of $S.$

\begin{definition}[Admissible set]
A compact subset $S\subset\Ncal$ is said to be admissible if and only if:
\begin{itemize}
\item $M_S:=\overline{S^\circ}$ is a finite collection of pairwise disjoint compact regions in $\Ncal$ with   $\mathcal{ C}^0$ boundary,
\item $C_S:=\overline{S-M_S}$ consists of a finite collection of pairwise disjoint analytical Jordan arcs, 
\item any component $\alpha$ of $C_S$  with an endpoint  $P\in M_S$ admits an analytical extension $\beta$ in $\Ncal$ such that the unique component of $\beta-\alpha$ with endpoint $P$ lies in $M_S,$ and
\item $S$ is Runge.
\end{itemize}
\end{definition}

For any subset $S\subset \Ncal,$ a function $f:S\to\c^n,$ $n\in\n,$ is said to be holomorphic if there exists a open set $U\subset\Ncal$ containing $S$ and a holomorphic function $h:U\to\c$ such that $h|_{S}=f.$

\begin{definition}
Let $S\subset\Ncal$ be an admissible set. A function $f:S\to\c^n,$ $n\in\n,$ is said to be {\em admissible} if $f|_{M_S}$
is holomorphic, and for any component $\alpha$ of $C_S$
and any open analytical Jordan arc $\beta$ in $\Ncal$ containing $\alpha,$  $f$ admits a smooth extension $f_\beta$ to $\beta$
satisfying that $f_\beta|_{U \cap \beta}=h|_{U \cap \beta},$ where $U\subset\Ncal$ is an open domain containing $M_S$ and $h:U\to\c^n$ is a holomorphic extension of $f.$
\end{definition}

Likewise, a complex 1-form $\theta$ of type $(1,0)$ on $S$ is said to be {\em abmissible} if for any closed conformal disc $(W,z)$ in $ \mathcal{N}$ such that $W\cap S$ is admissible then $\theta|_{W \cap S}=g(z) dz$ for an admissible function $g:W \cap S \to \c.$

Given an admissible function $f:S\to \c^n,$ we set $df$ as the vectorial admissible 1-form given by $df|_{M_S}=d (f|_{M_S})$ and $df|_{\alpha \cap W}=(f \circ \alpha)'(x)dz|_{\alpha \cap W},$
where $(W,z=x+i y)$ is a conformal chart on $\Ncal$ such that $\alpha \cap W=z^{-1}(\R \cap z(W)).$

If $f:S \to \c^n$ is admissible, then the $\Ccal^1$-norm of $f$ on $S$ is given by 
$$\|f\|_1=\max_S (\|f\|+ \|df/\omega\|).$$


\section{Main Lemma}\label{sec:lemma}

Set $\pi_1:\c^2\to\c$ the projection  $\pi_1(z,w)=z.$ We will need the following definition:

\begin{definition}[\text{\cite{W2,FW}}]\label{def:exposed}
Let $M\subset \Ncal$ be a Riemann surface possibly with boundary, and let $X:M \to \c^2$ be a proper holomorphic embedding. 
A point $p=(p_1,p_2)$ of the complex curve $\Sigma:=X(M)$ is said to be {\em exposed (with respect to $\pi_1$)} if the complex line $\Lambda_p=\pi_1^{-1}(\pi_1(p))=\{(p_1,w)\;|\; w\in\c\}$ intersects $\Sigma$ only at $p$ and this intersection is transverse, that is to say, $\Lambda_p\cap\Sigma=\{p\}$ and $T_p\Lambda_p\cap T_p\Sigma=\{0\}.$
\end{definition}

The proof of the following technical lemma is inspired by the ideas of Forstneri$\breve{\text{c}}$ and Wold \cite{W2,FW}. Roughly speaking, Lemma \ref{lem:main} below asserts that an embedded bordered Riemman surface in $\c^2$ whose boundary lies outside an Euclidean ball can be perturbed near the boundary in such a way that the boundary of the arising surface lies outside a bigger ball in $\c^2.$ The strength of this lemma is that embeddedness  is preserved in this process.

For any $r>0$ we denote by $\b(r)=\{z\in\c^2\;|\; \|z\|<r\}$ and $\overline{\b}(r)=\{z\in\c^2\;|\; \|z\|\leq r\}.$ 

\begin{lemma}\label{lem:main}
Let $M$ be a Runge compact region in $\Ncal,$ let $X:M\to \c^2$ be a holomorphic embedding and let $r>0$ such that 
\begin{equation}\label{eq:lema}
X(\partial M)\subset \c^2-\overline{\b}(r).
\end{equation}

Then, for any $\xi>0$ and any $\hat{r}>r,$ there exists a Runge compact region $\hat{M}$ on $\Ncal$ and a holomorphic embedding $\hat{X}:\hat{M}\to\c^2$ satisfying that:
\begin{enumerate}[{\rm ({L}.1)}]
\item $\hat{M}$ is a compact tubular neighborhood of $M,$
\item $\|\hat{X}-X\|_1<\xi$ on $M,$
\item $\hat{X}(\partial \hat{M})\subset \c^2-\overline{\b}(\hat{r}),$ and
\item $\hat{X}(\hat{M}-M^\circ)\subset \c^2-\overline{\b}(r).$
\end{enumerate}
\end{lemma}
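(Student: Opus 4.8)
The plan is to push the boundary of $M$ outward through $\overline{\b}(\hat r)$ in two conceptually distinct stages, following the Forstneri\v{c}--Wold scheme. First I would \emph{expose finitely many boundary points} of the complex curve $\Sigma=X(M)$: choose one point $p_j$ on each boundary component $\gamma_j$ of $\partial M$, and use the exposing technique of \cite{W2,FW} (which applies because $M$ is Runge in $\Ncal$ and $X$ is an embedding) to replace $X$ by a $\Ccal^1$-small perturbation $X_1:M\to\c^2$, still an embedding, such that each $X_1(p_j)$ is an exposed point of $X_1(M)$ with respect to $\pi_1$, and such that the $\pi_1$-images $\pi_1(X_1(p_j))$ are pairwise distinct and can be taken to have large modulus. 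Since the perturbation is small and \eqref{eq:lema} is an open condition, we keep $X_1(\partial M)\subset\c^2-\overline{\b}(r)$, and by the Runge property the perturbation extends holomorphically to a slightly larger region; we enlarge $M$ to a compact tubular neighborhood on which $X_1$ is still defined and embedded, so condition (L.1) is on track from the start.

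The second stage is the \emph{stretching}: near each exposed point $p_j$, attach to $M$ a short analytic arc $E_j$ running off to infinity in a direction where $\pi_1$ grows, forming an admissible set $S=M\cup(\bigcup_j E_j)$, and use the fact that $p_j$ is exposed to construct an admissible embedding of $S$ into $\c^2$ that agrees with $X_1$ on $M$ and maps the far endpoint of $E_j$ to a point of very large norm, while keeping the whole arc outside $\overline{\b}(r)$. The key analytic device is that, because $\Lambda_{p_j}$ meets $\Sigma$ transversely only at $p_j$, one can find a holomorphic automorphism (or a sequence of Runge-approximated maps) of $\c^2$ of the form $(z,w)\mapsto(z,w+h(z))$ — a shear whose $h$ has poles exactly at the points $\pi_1(X_1(p_j))$ — that pushes the arc endpoints far out without disturbing $X_1(M)$ much, because $X_1(M)$ stays in a region where $h$ is bounded. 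Applying Mergelyan/Runge approximation on the admissible set $S$ (available since $S$ is Runge and the target map is admissible), and then thickening $S$ to a compact tubular neighborhood $\hat M$ inside $\Ncal$, we obtain a holomorphic embedding $\hat X:\hat M\to\c^2$ with $\|\hat X-X\|_1<\xi$ on $M$ (by keeping all perturbations and the approximation error small), with $\hat X(\hat M-M^\circ)\subset\c^2-\overline{\b}(r)$ (the arcs and the thin added collar stay outside the $r$-ball by construction), and with $\hat X(\partial\hat M)$ arbitrarily far out, in particular outside $\overline{\b}(\hat r)$.

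I would carry out the steps in the order: (i) expose one point per boundary component, keeping track of Rungeness and of \eqref{eq:lema}; (ii) choose the analytic arcs $E_j$ and define the target admissible map on $S=M\cup\bigcup_j E_j$, using the exposedness to control behaviour near $\Lambda_{p_j}$; (iii) build the stretching map (shear with controlled poles) and combine it with the admissible map so that $M$ moves little but the arc endpoints move far; (iv) use approximation on the admissible set $S$ to replace the admissible embedding by a genuine holomorphic embedding on a neighborhood, invoking Runge/Mergelyan-type approximation and the fact that embeddedness is open and stable under $\Ccal^1$-small perturbations; (v) take $\hat M$ to be a compact tubular neighborhood of $S$ on which the embedding persists and all four conclusions hold.

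The main obstacle — and the reason the lemma is delicate rather than routine — is \textbf{preserving embeddedness} throughout the stretching. Pushing the boundary out via the shear is easy as a map of $\c^2$; the danger is that the image of the added arcs, or of the thin collar around them, could intersect $X_1(M)$ or self-intersect, destroying injectivity. This is exactly where the exposed-point hypothesis does the work: transversality of $\Lambda_{p_j}\cap\Sigma$ at the single point $p_j$ means the arc $E_j$ can be made to leave $\Sigma$ cleanly along $\Lambda_{p_j}$ and stay in the complement of $\Sigma$, and the poles of $h$ sitting precisely over the $\pi_1(p_j)$ ensure the stretching affects a neighborhood of each arc strongly while affecting $X_1(M)$ negligibly. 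Making this quantitative — choosing the arcs, the pole locations, the size of $h$, and the thickness of the tubular neighborhood so that all of (L.1)--(L.4) hold simultaneously, uniformly — is the technical heart of the argument, and is where the bulk of the estimates (which I will not grind through here) must be carried out, following \cite{W2,FW}.
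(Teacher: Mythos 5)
The proposal correctly identifies the Forstneri\v{c}--Wold scheme as the engine, but there are two genuine gaps that would make the argument as stated fail.

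\textbf{First, the exposed points are placed in the wrong location.} You expose points $p_j$ on $\partial M$ itself, attach arcs $E_j$ at these $p_j$, and then propose a shear with poles at $\pi_1(X_1(p_j))$. But the conclusion requires $M\subset\hat M$ (by the definition of compact tubular neighborhood), so $p_j\in\partial M\subset\hat M$; the shear would then be singular at points of $\hat M$, and $\hat X$ could not be holomorphic there. In the correct scheme (the one the paper uses), one first attaches abstract arcs $\gamma_j$ in $\Ncal$ from $a_j\in\partial M$ outward, maps them to arcs $\lambda_j$ in $\c^2$ whose \emph{far} endpoints $p_j=Y_2(b_j)$ are the exposed points, with $b_j$ on the boundary of an enlarged region $N_2\supset M\cup\bigcup_j\gamma_j$, and then the poles of the shear sit at $\pi_1(Y_2(b_j))$ with $b_j\notin M$. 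After removing $\{b_j\}$ from $N_2$ one still has $M\subset W=N_2-\{b_j\}$, so the shear is holomorphic on a neighborhood of $M$. The exposing and the arc-attachment are not two separate stages operating on $\partial M$; they are performed jointly, with the exposed points being the far tips of the arcs, outside $M$.

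\textbf{Second, the Wold automorphism step is missing, and you conflate the rational shear with an automorphism.} A map $(z,w)\mapsto(z,w+h(z))$ with $h$ rational and having poles is \emph{not} a holomorphic automorphism of $\c^2$; it is a map off the polar lines, used only to send the exposed boundary points to infinity so that $Z=g\circ Y_2|_W$ has unbounded image near $\partial W$. This shear does \emph{not} push the whole boundary $\partial\hat M$ outside $\overline{\b}(\hat r)$: it is close to the identity on $M$, so most of the (compact) boundary of any tubular neighborhood of $M$ will remain roughly where $X(\partial M)$ was, i.e.\ outside $\overline{\b}(r)$ but not outside $\overline{\b}(\hat r)$. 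What actually achieves (L.3) is a subsequent \emph{genuine} holomorphic automorphism $\phi$ of $\c^2$, produced by Wold's technique (Anders\'en--Lempert/Runge approximation on a polynomially convex compact $K$ with $Z(M)\cup\overline{\b}(r+\xi_0)\subset K\subset\c^2-Z(\partial W)$), which is $\mathcal{C}^1$-small on $K$ and pushes $Z(\partial W)$ outside $\overline{\b}(\hat r)$. One then sets $\hat X=\phi\circ Z$ and shrinks $W$ to a compact $\hat M$ whose boundary lies in the region already pushed out. Without this automorphism step, (L.3) cannot be obtained, and the claim that ``$\hat X(\partial\hat M)$ is arbitrarily far out'' is unjustified.
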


\begin{proof}
Fix $\xi_0\in ]0,\xi[$ so that
\begin{equation}\label{eq:lema'}
X(\partial M)\subset \c^2-\overline{\b}(r+\xi_0),
\end{equation}
see \eqref{eq:lema}. Take $\epsilon_0>0$ to be specified later.

We begin exposing boundary points as in \cite{FW}. 

Since we are assuming that $X$ holomorphically extends beyond $M,$ there exists a Runge compact region $N_1$ on $\Ncal$ and a holomorphic embedding $Y_1:N_1\to\c$ such that
\begin{enumerate}[{\rm ({a}.1)}]
\item $N_1$ is a compact tubular neighborhood of $M,$
\item $Y_1|_M=X,$ and
\item $Y_1(N_1-M^\circ)\subset \c^2-\overline{\b}(r+\xi_0).$
\end{enumerate}

Write $\partial M=\cup_{j=1}^m C_j,$ where $\{C_j\}_{j=1}^m$ are the connected components of $\partial M.$ Choose a point $a_j\in C_j$ and an analytic Jordan arc $\gamma_j\subset N_1^\circ-M^\circ$ with initial point $a_j,$ otherwise disjoint from $\partial M$ and such that the intersection of $\gamma_j$ and $C_j$ is transverse, $\forall j=1,\ldots,m.$ Take the arcs $\{\gamma_j\}_{j=1,\ldots,m}$ so that $M\cup(\cup_{j=1}^m \gamma_j)$ is admissible. Let $b_j$ denote the other endpoint of $\gamma_j,$ and let $U\subset N_1^\circ$ be a compact tubular neighborhood of $M$ such that $b_j\notin U,$ $U\cup(\cup_{j=1}^m \gamma_j)$ is admissible and $\tilde{\gamma}_j:=\gamma_j\cap U$ is a Jordan arc with an endpoint at $a_j,$ $j=1,\ldots,m.$

On the other hand, consider  pairwise disjoint smooth regular Jordan arcs $\{\lambda_j\;| \;j=1,\ldots,m\}$ in $\c^2$  such that 
\begin{enumerate}[{\rm ({b}.1)}]
\item $Y_1(a_j)$ is an endpoint of $\lambda_j,$ $Y_1(\tilde{\gamma}_j)\subset \lambda_j$ and $(\lambda_j-Y_1(\tilde{\gamma}_j))\cap Y_1(U)=\emptyset,$
\item $\lambda_j\subset \c^2-\overline{\b}(r+\xi_0),$ 
\item the other endpoint $p_j$ of $\lambda_j$ satisfies $\Lambda_{p_j}\cap (Y_1(M)\cup (\cup_{i=1}^m \lambda_i))=\{p_j\}$ and $T_{p_j}\Lambda_{p_j}\cap T^\c_{p_j}\lambda_j=\{0\},$ where $T^\c_{p_j}\lambda_j$ is the complexification of the real tangent line to $\lambda_j$ at $p_j,$ and
\item $|\pi_1(p_j)|>r+\xi_0,$ 
\end{enumerate} 
for all $j\in \{1,\ldots,m\},$ see Figure \ref{fig:exposing}. Notice that (b.3) is a generalization of Definition \ref{def:exposed}. Item (b.2) is possible since (a.3) holds.
\begin{figure}[ht]
    \begin{center}
    \scalebox{0.45}{\includegraphics{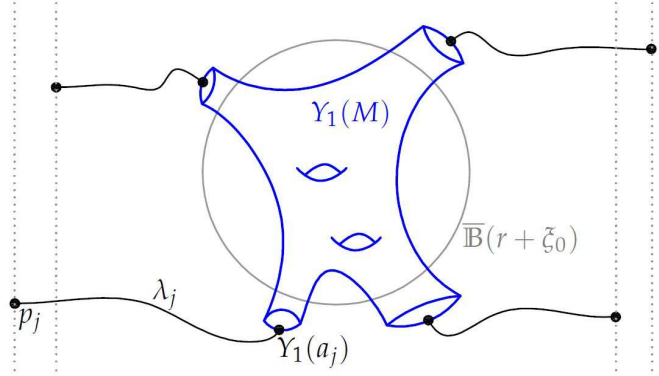}}
        \end{center}
        \vspace{-0.5cm}
\caption{The arcs $\lambda_j.$}\label{fig:exposing}
\end{figure}

Consider an admissible embedding $\hat{Y}_1:U\cup (\cup_{j=1}^m \gamma_j)\to \c^2$ such that
\begin{enumerate}[{\rm ({c}.1)}]
\item $\hat{Y}_1|_U=Y_1,$ and 
\item $\hat{Y}_1(\gamma_j)=\lambda_j,$ $j=1,\ldots,m.$ In particular, $\hat{Y}_1(b_j)=p_j.$
\end{enumerate}

By Mergelyan's Theorem (see for instance \cite[Theorem 3.2]{Fo}), we can find a Runge compact region $N_2$ and a holomorphic embedding $Y_2:N_2\to\c^2$ such that
\begin{enumerate}[{\rm ({d}.1)}]
\item $N_2$ is a compact tubular neighborhood of $U$ (hence, of $M$), with $\gamma_j\subset N_2\subset N_1^\circ$ and $b_j\in \partial N_2,$ $j=1,\ldots,m,$
\item $\|Y_2-\hat{Y}_1\|_1<\epsilon_0$ on $U\cup (\cup_{j=1}^m \gamma_j),$
\item $Y_2(N_2-M^\circ)\subset\c^2-\overline{\b}(r+\xi_0),$ and
\item $Y_2(b_j)=\hat{Y}_1(b_j)=p_j$ is an exposed point for $Y_2(N_2).$
\end{enumerate}
Notice that (d.3) can be guaranteed from (a.3), (b.2), (c.1) and (c.2). Property (d.4) is possible thanks to (b.3) (see  \cite[Theorem 4.2]{FW} for more details).

The second step in the proof of Lemma \ref{lem:main} consists of pushing $Y_2(\partial N_2)$ out of $\overline{\b}(\hat{r}).$ Now we are inspired by \cite{W2} and \cite[Theorem 5.1]{FW}.

Write $\partial N_2=\cup_{j=1}^m \Gamma_j,$ where $\{\Gamma_j\}_{j=1}^m$ are the connected components of $\partial N_2.$ Set 
\begin{equation}\label{eq:g}
g:\c^2\to\c\times\overline{\c},\quad g(z,w)=\left(z,w+\sum_{j=1}^m \frac{\alpha_j}{z-\pi_1(Y_2(b_j))}\right),
\end{equation}
where the coefficients $\alpha_j\in\c-\{0\}$ are chosen so that the following assertions hold.
\begin{enumerate}[{\rm ({e}.1)}]
\item $\pi_2$ maps the curve $\mu_j:=(g\circ Y_2)(\Gamma_j-\{b_j\})\subset\c^2$ into an unbounded curve $\delta_j\subset\c,$ and $\pi_2:\mu_j\to\delta_j$ is a diffeomorphism near infinity, where $\pi_2:\c^2\to\c$ is given by $\pi_2(z,w)=w.$
\item $\overline{\d}(\rho)\cup (\cup_{j=1}^m \delta_j)$ is Runge in $\c$ for any large enough $\rho\in\r,$ where $\overline{\d}(\rho)=\{z\in\c\;|\;|z|\leq\rho\}.$
\item $\|g\circ Y_2-Y_2\|_1<\epsilon_0$ on $M.$ 
\item $(g\circ Y_2)((N_2-\{b_j\}_{j=1,\ldots,m})-M^\circ)\subset \c^2-\overline{\b}(r+\xi_0).$
\end{enumerate}
This can be guaranteed by a careful choice of the argument of the complex number $\alpha_j,$ while $|\alpha_j|$ must be chosen as small as needed, $j=1,\ldots,m.$ To achieve properties (e.3) and (e.4), we argue as follows. First, fix pairwise disjoint small open discs $W_j\subset \Ncal,$ $j=1,\ldots,m,$ such that $b_j \in W_j$ and 
\begin{equation}\label{eq:nofacil}
|\pi_1(Y_2(W_j \cap N_2))|>r+\xi_0\;\text{ for all $j,$}
\end{equation}
see (b.4). Then choose $|\alpha_j|,$ $j=1,\ldots,m,$ small enough so that $(g \circ Y_2)(N_2-(M^\circ \cup_j W_j))\subset \c^2-\overline{\b}(r+\xi_0)$ (see (d.3)) and $\|g\circ Y_2-Y_2\|_1<\epsilon_0$ on $M.$ As $\pi_1 \circ g=\pi_1,$ then \eqref{eq:nofacil} gives that $(g \circ Y_2)(N_2\cap W_j)\subset \c^2-\overline{\b}(r+\xi_0)$ as well.

Label $W:=N_2-\{b_j\}_{j=1,\ldots,m},$ set $Z:W\to\c^2,$ $Z:=g\circ Y_2|_W,$ and note that $Z$ is a well defined holomorphic embedding thanks to (d.4). Furthermore, $Z$ has the following property: there exists a compact polynomially convex $K_0\subset Z(W)$ in $\c^2$ such that $K:=K_0\cup \overline{\b}(r+\xi_0)$ is polynomially convex and $Z(M)\subset K\subset \c^2-Z(\partial W)$ (see (e.4) and the proof of Theorem 5.1 in \cite{FW}). Moreover there exists a holomorphic automorphism $\phi$ of $\c^2$ such that
\begin{enumerate}[{\rm ({f}.1)}]
\item $Z(M)\cup\overline{\b}(r+\xi_0)\subset K\subset \c^2-Z(\partial W),$ notice that $\partial W=\partial N_2-\{b_j\}_{j=1,\ldots,m},$
\item $\|\phi-{\rm Id}_{\c^2}\|<\epsilon_0$ on $K,$ and $\|\phi\circ Z -Z\|_1<\epsilon_0$ on $M,$ and
\item $(\phi\circ Z)(\partial W)\subset \c^2-\overline{\b}(\hat{r}).$
\end{enumerate}
Such $\phi$ is constructed in \cite{W1} from (e.1) and (e.2), see also the proof of Theorem 5.1 in \cite{FW}.

Define $\hat{X}:W\to\c^2,$ $\hat{X}:=\phi\circ Z,$ and let us check that $\hat{X}$ {\em almost} satisfies the conclusion of Lemma \ref{lem:main}.
\begin{itemize}
\item $W^\circ$ is an open tubular neighborhood of $M.$ See (d.1) and the definitions of $U$ and $W.$

\item $\|\hat{X}-X\|_1<\xi$ on $M.$ Indeed, use (a.2), (d.2), (e.3), (f.1) and (f.2) and assume that $\epsilon_0$ was chosen small enough from the beginning.

\item $\hat{X}(\partial W)\subset \c^2-\overline{\b}(\hat{r}).$ See (f.3).

\item $\hat{X}(W-M^\circ)\subset \c^2-\overline{\b}(r).$ Indeed, if $\epsilon_0$ is chosen small enough from the begining, then taking into account (f.1), (f.2) and that $\phi$ is bijective, we conclude that $\phi(\c^2-\overline{\b}(r+\xi_0))\subset  \c^2-\phi(\overline{\b}(r)).$ Then use (e.4).
\end{itemize}

Taking into account these properties of $\hat{X},$ we finish by setting $\hat{M}$ as a suitable shrinking of $W$ satisfying (L.1).
The proof is done.
\end{proof}


\section{Main Theorem}\label{sec:theorem}

We will need the following
\begin{definition}\label{def:emb}
Let $K$ be a compact subset of $\Ncal,$  let $f:K\to\c^2$ be a topological embedding and let $n\in\n.$ We define
\[
\Psi(K,f,n):=\left.\frac{1}{2n^2} \inf \right\{ \|f(p)-f(q)\|\;\left|\; p, q\in K,\; \dsf(p,q)>\frac1{n}  \right\},
\]
where $\dsf(\cdot,\cdot)$ means distance in the Riemannian surface $(\Ncal,\omega),$ see Remark \ref{re:fun}.
Notice that $\Psi(K,f,n)>0.$
\end{definition}

Now we can state and prove the main result of this paper. 


\begin{theorem}\label{th:main}
Let $N$ be a Runge compact region on $\Ncal$ and  let $Y:N\to\c^2$ be a holomorphic embbeding. Assume that
\begin{equation}\label{eq:theorem}
Y(\partial N)\subset \c^2-\overline{\b}(s)
\end{equation}
for a positive $s.$

Then, for any $\epsilon>0,$ there exist an open domain $\Mcal\subset \Ncal$ and a proper holomorphic embedding $\Xcal:\Mcal\to \c^2$ satisfying
\begin{enumerate}[{\rm ({T}.1)}]
\item $N\subset\Mcal,$ $\Mcal$ is Runge and isotopic to $\Ncal,$
\item $\|\Xcal-Y\|_1<\epsilon$ on $N,$ and
\item $\Xcal(\Mcal-N^\circ)\subset \c^2-\overline{\b}(s).$
\end{enumerate} 
\end{theorem}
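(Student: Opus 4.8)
The strategy is a standard exhaustion-plus-Mittag-Leffler scheme, using Lemma \ref{lem:main} as the inductive engine. The plan is to construct a sequence of Runge compact regions $\{N_k\}_{k\in\n}$ in $\Ncal$ with $N_1=N$, each $N_{k+1}$ a compact tubular neighborhood of $N_k$, together with holomorphic embeddings $Y_k:N_k\to\c^2$, so that (i) $Y_{k+1}$ is a very small $\Ccal^1$-perturbation of $Y_k$ on $N_k$, (ii) $Y_k(\partial N_k)\subset\c^2-\overline{\b}(r_k)$ for a strictly increasing sequence $r_k\to+\infty$ with $r_1=s$, and (iii) $Y_k(N_k-N^\circ)\subset\c^2-\overline{\b}(s)$. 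Two features must be interleaved with the ball-escape produced by Lemma \ref{lem:main}: first, the regions must eventually exhaust $\Ncal$ topologically, i.e. $\cup_k N_k$ must be isotopic to $\Ncal$; second, the limit map must be injective and an immersion, not merely a pointwise limit. The role of $\Psi(K,f,n)$ from Definition \ref{def:emb} is precisely to quantify injectivity: at stage $k$ one records $\psi_k:=\Psi(N_k,Y_k,k)>0$ and then demands that all later perturbations move points by less than, say, $2^{-k}\psi_k$ in sup-norm on $N_k$, which guarantees that the uniform limit remains injective on each $N_k$ and hence on $\Mcal$; similarly the $\Ccal^1$-closeness controls the derivative and keeps the limit an immersion.

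The inductive step splits into two moves. First, a \emph{topological enlargement}: given $Y_k:N_k\to\c^2$, one enlarges $N_k$ to a slightly bigger Runge compact region that picks up whichever handles or ends of $\Ncal$ are to be incorporated at this stage, extending $Y_k$ holomorphically across the new piece by a Mergelyan approximation (as in the passage from $M$ to $N_1$ and $N_2$ in the proof of Lemma \ref{lem:main}); here one also has to keep the boundary outside $\overline{\b}(r_k)$ and keep the complement-of-$N^\circ$ image outside $\overline{\b}(s)$, which is arranged exactly as item (a.3)/(d.3) there, by choosing the extension to carry the new boundary curves far out. Second, the \emph{ball-escape move}: apply Lemma \ref{lem:main} to this enlarged embedding with the previous ball radius playing the role of $r$, a chosen $\hat r=r_{k+1}>r_k+1$, and $\xi$ taken smaller than $2^{-k}\min\{\psi_1,\dots,\psi_k\}$ and smaller than $2^{-k}$; conclusions (L.1)–(L.4) give the next region $N_{k+1}$, the next embedding $Y_{k+1}$, the $\Ccal^1$-closeness (T.2)-type estimate, the pushed-out boundary, and crucially (L.4) which preserves $Y_{k+1}(N_{k+1}-N^\circ)\subset\c^2-\overline{\b}(s)$. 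Arranging that the topological-enlargement moves, performed infinitely often, exhaust $\Ncal$ is a bookkeeping matter: fix beforehand a normal exhaustion of $\Ncal$ by Runge compact regions and make sure that by stage $k$ the region $N_k$ has absorbed (is a tubular neighborhood of) the $k$-th member of that exhaustion.

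Passing to the limit, set $\Mcal=\cup_k N_k^\circ$, an open Runge domain isotopic to $\Ncal$ by construction, giving (T.1). The maps $Y_k$ converge uniformly on compact subsets of $\Mcal$ to a holomorphic map $\Xcal:\Mcal\to\c^2$; the summable $\Ccal^1$-control yields $\|\Xcal-Y\|_1<\epsilon$ on $N$, i.e. (T.2), provided the very first tolerance was chosen $<\epsilon$ and the tail is geometrically small. Injectivity of $\Xcal$ follows from the $\Psi$-estimates: if $p\neq q$ lie in $\Mcal$, both lie in some $N_k$ with $\dsf(p,q)>1/k$, and the total later displacement is $<\psi_k=\Psi(N_k,Y_k,k)$, so $\|\Xcal(p)-\Xcal(q)\|>0$; the $\Ccal^1$-estimates keep $d\Xcal$ nonvanishing, so $\Xcal$ is a holomorphic embedding. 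Property (L.4) across all stages gives $\Xcal(\Mcal-N^\circ)\subset\c^2-\overline{\b}(s)$, which is (T.3). Properness is the point to watch: since $Y_k(\partial N_k)\subset\c^2-\overline{\b}(r_k)$ and the perturbations after stage $k$ are tiny, one gets $\|\Xcal(p)\|\geq r_k-1$ for $p\in\Mcal-N_k^\circ$ (or $p\in N_{k+1}-N_k^\circ$ plus an induction over the nested shells), whence $\|\Xcal(p)\|\to\infty$ as $p$ leaves every compact subset of $\Mcal$; because $r_k\to\infty$ this forces $\Xcal$ to be proper. The main obstacle is the simultaneous bookkeeping — one must dovetail the ball radii $r_k\to\infty$, the shrinking tolerances $\xi_k$ (which depend on the a priori unknown numbers $\psi_1,\dots,\psi_k$, hence must be fixed only \emph{after} stage $k$ is built), the $\Ccal^1$-control needed for immersivity, the preservation of $\c^2-\overline{\b}(s)$ via (L.4), and the topological exhaustion, all at once — but each individual ingredient is supplied by Lemma \ref{lem:main} together with Mergelyan approximation, so the argument is a careful but routine induction.
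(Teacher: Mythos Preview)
Your plan is essentially the paper's: build Runge compact regions $N_j$ with embeddings $Y_j$ by alternating a handle-attachment (Mergelyan) step with Lemma \ref{lem:main}, choosing the tolerance at stage $j$ below $\Psi(N_{j-1},Y_{j-1},j)$ for injectivity and below a geometric $\Ccal^1$-quantity (the paper's $\varrho_{j-1}$) for immersivity, with (L.4) feeding both properness and (T.3).

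There is, however, one concrete step that does not work as written. You propose to ``make sure that by stage $k$ the region $N_k$ has absorbed (is a tubular neighborhood of) the $k$-th member of that exhaustion'', i.e.\ $M_k\subset N_k$. Lemma \ref{lem:main} gives no such size control: its output $\hat M$ is only \emph{some} compact tubular neighborhood of the input, constrained to lie inside whatever domain the given embedding happens to extend to holomorphically; you cannot force it to swallow a prescribed $M_k$. Indeed, if you could, then $\Mcal=\bigcup_k N_k\supset\bigcup_k M_k=\Ncal$, and you would have proved the full Bell--Narasimhan conjecture, not merely its topological version. The paper handles this differently: it fixes an exhaustion $\{M_j\}$ with $\chi(M_j- M_{j-1}^\circ)\in\{-1,0\}$, and when $\chi=-1$ it attaches to $N_{j-1}$ a single analytic arc $\gamma\subset\Ncal- N_{j-1}^\circ$ representing the new homology generator, extends the embedding admissibly along $\gamma$ and approximates by Mergelyan, and only then applies Lemma \ref{lem:main}. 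Crucially, it never claims $M_j\subset N_j$; instead it records a compatible family of isotopic homeomorphisms $\sigma_j:N_j\to M_j$ with $\sigma_j|_{N_{j-1}}=\sigma_{j-1}$, which glue to a homeomorphism $\sigma:\Mcal\to\Ncal$ and yield (T.1). Replace your set-theoretic ``absorption'' by this homeomorphism bookkeeping and your argument goes through and coincides with the paper's.
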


\begin{proof}
Consider an exhaustion $\{M_j\}_{j\in\n}$ of $\Ncal$ by Runge compact regions so that $M_1=N,$ and $M_{j-1}\subset M_j^\circ$ and the Euler characteristic $\chi(M_j-M_{j-1}^\circ)\in\{-1,0\}$ for all $j\geq 2$ (if $\Ncal$ has finite topology then $\chi(M_j-M_{j-1}^\circ)=0$ for any large enough $j$).

Since $Y(\partial N)$ is compact, equation \eqref{eq:theorem} guarantees the existence of $s_0>s$ such that
\begin{equation}\label{eq:theorem'}
Y(\partial N)\subset \c^2-\overline{\b}(s_0).
\end{equation}

Take $\epsilon_0>0$ with
\begin{equation}\label{eq:ep0}
\epsilon_0<\min\{\epsilon,s_0-s\},
\end{equation}
to be specified later. 

\begin{claim} There exists a sequence $\{\Xi_j\}_{j \in \n}:=\{(N_j,\sigma_j,Y_j,\epsilon_j)\}_{j\in\n},$ where
\begin{itemize}
\item $N_j$ is a Runge compact region on $\Ncal$ isotopic to $M_j,$
\item $\sigma_j:N_j\to M_j$ is an isotopic homeomorphism,
\item $Y_j:N_j\to\c^2$ is a holomorphic embedding, and
\item $\epsilon_j>0,$ $j\in\n,$
\end{itemize}
such that 
\begin{enumerate}[{\rm (A{$_j$})}]
\item $N_{j-1}\subset N_j^\circ$ and  $\sigma_j|_{N_{j-1}}=\sigma_{j-1},$  
\item $\epsilon_j<\min\{\epsilon_0/2^j\,,\,\Psi(N_{j-1},Y_{j-1},j)\,,\,\epsilon_{j-1}\,,\,\varrho_{j-1}\},$ where
\[
\varrho_{j-1}:=\frac1{2^j}\min\left\{ \min_{N_k} \big\| \frac{dY_{k}}{\omega} \big\|\;\big|\; k=1,\ldots,j-1\right\} >0,
\] 
\item $\|Y_j-Y_{j-1}\|_1<\epsilon_j$ on $N_{j-1},$ 
\item $Y_j(\partial N_j)\subset \c^2-\overline{\b}(s_0+j-1),$ and 
\item $Y_j(N_j-N_{j-1}^\circ)\subset \c^2-\overline{\b}(s_0+j-2).$
\end{enumerate}
\end{claim}
\begin{proof}
The sequence is constructed inductively. Set $\Xi_1:=(N,{\rm Id}|_{N},Y,\epsilon_1),$ where $\epsilon_1<\epsilon_0/4.$ Equation \eqref{eq:theorem'} gives property (D$_1$) whereas properties (A$_1$), (B$_1$), (C$_1$) and (E$_1$) do not make sense. 


To prove the inductive step, assume that $\Xi_1,\ldots,\Xi_{j-1}$ are already constructed satisfying the required properties and let us construct $\Xi_j,$ $j\geq 2.$

We need to distinguish two cases depending on $\chi(M_j-M_{j-1}^\circ).$

\noindent $\bullet$ {\bf Case 1.} Assume $\chi(M_j-M_{j-1}^\circ)=0.$ Apply Lemma \ref{lem:main} to the data
\[
M=N_{j-1},\quad X=Y_{j-1},\quad r=s_0+j-2,\quad \xi=\epsilon_j\quad\text{and}\quad \hat{r}=s_0+j-1,
\]
where $\epsilon_j$ is any positive satisfying (B$_j$). Observe that the lemma can be applied thanks to property (D$_{j-1}$). Then we set $\Xi_j:=(N_j=\hat{M},\sigma_j,Y_j=\hat{X},\epsilon_j),$ where $\hat{M}$ and $\hat{X}$ are the data arising from the lemma and $\sigma_j:N_j\to M_j$ is any homeomorphism with $\sigma_j|_{N_{j-1}}=\sigma_{j-1}.$ Properties (A$_j$), (C$_j$), (D$_j$) and (E$_j$) directly follow from (L.1), (L.2), (L.3) and (L.4) of Lemma \ref{lem:main}, respectively.

\noindent $\bullet$ {\bf Case 2.} Assume $\chi(M_j-M_{j-1}^\circ)=-1.$ First of all, fix $\epsilon_j>0$ satisfying (B$_j$). 
Take a Runge compact region $R_1$ and a holomorphic embedding $Z_1:R_1\to\c^2$ such that
\begin{enumerate}[{\rm ({a}.1)}]
\item $R_1$ is a compact tubular neighborhood of $N_{j-1},$
\item $Z_1|_{N_{j-1}}=Y_{j-1},$ and
\item $Z_1(R_1-N_{j-1}^\circ)\subset \c^2-\overline{\b}(s_0+j-2).$
\end{enumerate}
(Recall that $Y_{j-1}$ extends holomorphically beyond $N_{j-1}$ in $\Ncal.$)
Consider a smooth Jordan curve $\hat{\alpha}\in \Hcal_1(M_j,\z)-\Hcal_1(M_{j-1},\z)$ contained in $M_j^\circ$ and intersecting $M_j-M_{j-1}^\circ$ in a Jordan arc $\alpha$ with  endpoints $a,$ $b$ in $\partial M_{j-1}$ and otherwise disjoint from $M_{j-1}.$ Since $M_{j-1}$ and $M_j$ are Runge and $\chi(M_j-M_{j-1}^\circ)=-1,$ then  $\Hcal_1(M_j,\z)=\Hcal_1(M_{j-1}\cup \alpha,\z)$ and $M_{j-1} \cup \alpha$ is Runge as well. Take an analytic Jordan arc $\gamma\subset  \Ncal-N_{j-1}^\circ$ with endpoints $\sigma_{j-1}^{-1}(a),$ $\sigma_{j-1}^{-1}(b)$ in  $\partial N_{j-1},$ otherwise disjoint from $N_{j-1},$ transversally intersecting $\partial N_{j-1}$ and such that $N_{j-1}\cup\gamma$ is admissible. Take also an isotopic homeomorphism $\varsigma:N_{j-1} \cup \gamma \to M_{j-1} \cup \alpha$ so that  $\varsigma|_{N_{j-1}}=\sigma_{j-1}$ and $\varsigma(\gamma)=\alpha.$

On the other hand, consider in $\c^2$ a smooth regular Jordan arc $\lambda$ agreeing with $Z_1(\gamma\cap R_1)$ near the endpoints $Z_1(\sigma_{j-1}^{-1}(a))$ and $Z_1(\sigma_{j-1}^{-1}(b)),$ and such that
\begin{enumerate}[{\rm ({b}.1)}]
\item $(\lambda-Z_1(\gamma\cap R_1))\cap Z_1(N_{j-1})=\emptyset,$ and
\item $\lambda\subset \c^2-\overline{\b}(s_0+j-2).$
\end{enumerate}
This choice of $\lambda$ is possible thanks to property (a.3). Consider the admissible embedding $\hat{Z}_1:N_{j-1}\cup\gamma\to \c^2$ given by $\hat{Z}_1|_{N_{j-1}}=Z_1$ and $\hat{Z}_1(\gamma)=\lambda.$ Mergelyan's Theorem provides Runge a compact region $R_2$ and a holomorphic embedding $Z_2:R_2\to\c^2$ satisfying that
\begin{enumerate}[{\rm ({c}.1)}]
\item $R_2$ is a compact tubular neighborhoods of $N_{j-1}\cup\gamma,$ 
\item $\|Z_2-\hat{Z}_1\|_1<\epsilon_j/2$ on $N_{j-1}\cup\gamma,$ and
\item $Z_2(R_2-N_{j-1}^\circ)\subset \c^2-\overline{\b}(s_0+j-2).$
\end{enumerate}

Since $Z_2(R_2-N_{j-1}^\circ)$ is compact, (c.3) implies the existence of $\varepsilon\in]0,\epsilon_j/2[$ small enough so that
\begin{equation}\label{eq:ep*}
Z_2(R_2-N_{j-1}^\circ)\subset \c^2-\overline{\b}(s_0+j-2+\varepsilon).
\end{equation}

Set $\Xi_j:=(N_j=\hat{M},\sigma_j,Y_j=\hat{X},\epsilon_j),$ where $\hat{M}$ and $\hat{X}$ are the data arising from Lemma \ref{lem:main} applied to the data
\[
M=R_2,\quad X=Z_2,\quad r=s_0+j-2+\varepsilon,\quad \xi=\varepsilon\quad\text{and}\quad \hat{r}=s_0+j-1,
\]
where $\sigma_j:N_j\to M_j$ is any homeomorphism with $\sigma_j|_{N_{j-1}\cup\gamma}=\varsigma.$ Observe that the lemma can be applied thanks to (c.3). Property (A$_j$) follows from (a.1), (c.1) and Lemma \ref{lem:main}-(L.1). Property (C$_j$) is implied by (a.2), (c.2) and Lemma \ref{lem:main}-(L.2). Item (L.3) in Lemma \ref{lem:main} gives (D$_j$). Finally, to check (E$_j$) consider a point $p\in N_j-N_{j-1}^\circ$ and let us distinguish cases. If $p\in N_j-R_2^\circ$ then Lemma \ref{lem:main}-(L.4) gives $Y_j(p)\in \c^2-\overline{\b}(s_0+j-2)$ and we are done. Otherwise $p\in R_2-N_{j-1}^\circ,$ and in this case Lemma \ref{lem:main}-(L.2) and equation \eqref{eq:ep*} guarantee that $Y_j(p)\in \c^2-\overline{\b}(s_0+j-2)$ as well.

This concludes the construction of the sequence $\{\Xi_j\}_{j\in\n}$ satisfying the desired properties.
\end{proof}

Set $\Mcal:=\cup_{j\in\n} N_j$ and $\sigma:\Mcal \to \Ncal,$ $\sigma|_{N_j}=\sigma_j.$ Since $\{M_j\}_{j\in\n}$ is an exhaustion of $\Ncal$ by Runge compact regions and $\sigma_j$ is an isotopic homeomorphism for all $j,$ then  $\sigma$ is an isotopic homeomorphism as well and statement (T.1) holds.

Properties (B$_j$) and (C$_j$), $j\in\n,$ imply that the sequence of holomorphic maps $\{Y_j\}_{j\in\n}$ uniformly converges on compact subsets of $\Mcal$ to a holomorphic map $\Xcal:\Mcal\to\c^2$ satisfying 
\begin{equation}\label{eq:<ep0}
\|\Xcal-Y\|_1<\epsilon_0\quad\text{on }N.
\end{equation}
(Recall that $Y_1=Y$ and $N_1=N$). This implies (T.2) (see equation \eqref{eq:ep0}).

Let us check (T.3). Take $p\in \Mcal-N^\circ.$ Then, there exists $j\geq 2$ such that $p\in N_j-N_{j-1}^\circ$ and, by properties (C$_j$) and (E$_j$), $\Xcal (p)\in\c^2-\overline{\b}(s_0+j-2-\epsilon_0)\subset\c^2-\overline{\b}(s),$ see \eqref{eq:ep0}.

To check that $\Xcal$ is injective we have to work a little further. Take $p,q\in \Mcal,$ $p\neq q,$ and let us prove that $\Xcal(p)\neq \Xcal(q).$ Indeed, consider a large enough $j_0\in\n$ so that  
$\{p,q\}\subset N_{j}$ and $\dsf(p,q)>1/j,$ $\forall j\geq j_0.$
Then, for any $j>j_0,$ from properties (B$_j$) and (C$_j$), one has
\begin{eqnarray*}
\|Y_{j-1}(p)-Y_{j-1}(q)\| & \leq & \|Y_{j-1}(p)-Y_{j}(p)\| + \|Y_{j}(p)-Y_{j}(q)\| + \|Y_{j}(q)-Y_{j-1}(q)\| \\
& < & 2\epsilon_j + \|Y_{j}(p)-Y_{j}(q)\| \\
& \leq & \frac{1}{j^2} \|Y_{j-1}(p)-Y_{j-1}(q)\|+ \|Y_{j}(p)-Y_{j}(q)\|,
\end{eqnarray*}
see Definition \ref{def:emb}. Therefore, $\|Y_j(p)-Y_j(q)\|\geq ( 1-{1}/{j^2}) \|Y_{j-1}(p)-Y_{j-1}(q)\|,$ $\forall j>j_0,$
and so
\[
\|Y_{j_0+i}(p)-Y_{j_0+i}(q)\|\geq \|Y_{j_0}(p)-Y_{j_0}(q)\|\cdot\prod_{j=j_0+1}^{j_0+i}\left( 1-\frac{1}{j^2}\right),\quad \forall i\in\n.
\]
Taking limits in the above inequality as $i\to\infty$ we obtain that $\|\Xcal(p)-\Xcal(q)\|\geq \frac12\|Y_{j_0}(p)-Y_{j_0}(q)\|>0$ (recall that $Y_{j_0}$ is an embedding) and we are done.

Let us check that $\Xcal:\Mcal\to\c^2$ is proper. Consider a compact subset $K\subset\c^2.$ It suffices to prove that $\Xcal^{-1}(K)$ is compact in $\Mcal.$ Take $j_0\in\n$ large enough so that $K\subset \overline{\b}(s_0+j_0-2-\epsilon_0).$ On the other hand, properties (B$_j$) and (E$_j$) give that $\Xcal(N_j-N_{j-1}^\circ)\subset \c^2-\overline{\b}(s_0+j_0-2-\epsilon_0)$ for any $j\geq j_0.$ Hence $\Xcal^{-1}(K)\subset N_{j_0-1}$ which is compact in $\Mcal,$ and we are done.

Finally, let us check that $\Xcal$ is an immersion, hence an embedding. Take $p\in \Mcal$ and $j_0\in\n$ such that $p\in N_j$ $\forall j\geq j_0.$ Then 
\begin{eqnarray*}
\|d\Xcal/\omega\|(p) & \geq & \|dY_{j_0}/\omega\|(p)-\sum_{j>j_0}\|Y_j-Y_{j-1}\|_1\geq \|dY_{j_0}/\omega\|(p)-\sum_{j>j_0}\epsilon_j
\\
&\geq & \|dY_{j_0}/\omega\|(p)-\sum_{j>j_0}\varrho_{j-1}\geq \|dY_{j_0}/\omega\|(p)\big(1-\sum_{j>j_0}\frac1{2^j}\big)\geq \frac12\|dY_{j_0}/\omega\|(p)>0,
\end{eqnarray*}
where we have used (B$_j$), $j> j_0.$ The proof of Theorem \ref{th:main} is done.
\end{proof}

Main Theorem in the introduction easily follows from Theorem \ref{th:main}. Indeed, let $\Ncal$ be an open Riemann surface, let $N$ be a conformal compact disc on $\Ncal$ and let $Y:N\to\c^2$ be a holomorphic embedding with $Y(\partial N)\subset \c^2-\overline{\b}(1).$ Then Theorem \ref{th:main} provides an open domain $\Mcal\subset\Ncal$ homeomorphic to $\Ncal$ and a proper holomorphic embedding $\Xcal:\Mcal\to\c^2.$ Furthermore, if we substitute $\Ncal$ for any hiperbolic isotopic subdomain of $\Ncal,$ the arising domain $\Mcal$ is hyperbolic as well.


\end{document}